\documentclass[11pt]{article}

\usepackage[a4paper,margin=1in]{geometry}
\usepackage{amsmath,amssymb,amsthm}
\usepackage{graphicx}
\usepackage{booktabs}
\usepackage[T1]{fontenc}
\usepackage{lmodern}
\usepackage{microtype}

\newtheorem{theorem}{Theorem}
\newtheorem{remark}{Remark}

\title{ Second order mixed moment inequalities based on Gram matrices}
\author{Sergio Scarlatti\footnote{email: sergio.scarlatti@uniroma2.it}
\\
Dept. of Finance and Economics, University of Rome Tor-Vergata}\date{June 2026}

\begin{document}

\maketitle

\begin{abstract}
Recently [LT; Theorem 3.1] showed an extension of Walker's inequality [W] based on $N=3$ random variables. In this note we prove that extension is just a particular three-dimensional instance of a  general family  of second order mixed moment inequalities based on Gram matrices of arbitrary random vectors. We also discuss some implications of these inequalities on Cramer-Rao lower bound for biased estimators.
\end{abstract}

\section{Introduction and general settings}

There have been recent reconsiderations of a result of Walker [W] showing a self-improvement of Cauchy - Schwarz inequality, which is effective for non centered random variables, see [S] and [LT]. In particular [LT] presents a general inequality for $N=3$ r.v.'s which includes Walker's inequality as a special case. The aim of this note is to show that, by using Gram matrices,  a much more general result holds, and, furthermore, this generality can be used in statistical estimation theory to build improved bounds on the MSE of biased estimators. To this aim let $X=(X_1,\dots,X_N)$ be an $N$-dimensional random vector whose components belong to $L^2(\Omega,\mathcal F,\mathbb P)$, endowed with the inner product $\langle U,V\rangle=E(UV)$. By definition the Gram matrix of $X$ is $\text{Gram}(X)=G_N\equiv (E(X_hX_k))_{h,k=1}^N$. Hence $G_N=({g}_{hk})_{h,k=1}^N$ is symmetric positive semidefinite.

Fix two distinct indices $i\neq j$, we want to estimate the matrix entry ${g}_{ij}=E(X_iX_j)$ in terms of the remaining entries of $G_N$. To this aim, let $S=\{1,\ldots,N\}$ and define
\begin{equation}
S^{-}_{ij}\equiv \{1,\dots,N\}\setminus\{i,j\},
\end{equation}
moreover let $G_{S^{-}_{ij}}$ be the principal Gram block indexed by $S^{-}_{ij}$, namely
\begin{equation}
G_{S^{-}_{ij}}\equiv ({g}_{hk})_{h,k\in S^{-}_{ij}}.
\end{equation}
Throughout this note, we assume that $G_{S^{-}_{ij}}$ is positive definite, equivalently, the family $(X_h)_{h\in S^{-}_{ij}}$ is linearly independent in $L^2$.
Corresponding to a fixed index define the vector $g_i\equiv ({g}_{ih})_{h\in S^{-}_{ij}}$ , then set
\begin{equation}
\pi_{ij\mid S^{-}_{ij}}\equiv g_i^TG_{S^{-}_{ij}}^{-1}g_j,
\end{equation}
and
\begin{equation}
\Delta_{i\mid S^{-}_{ij}}\equiv {g}_{ii}-g_i^TG_{S^{-}_{ij}}^{-1}g_i,
\qquad
\Delta_{j\mid S^{-}_{ij}}\equiv {g}_{jj}-g_j^TG_{S^{-}_{ij}}^{-1}g_j.
\end{equation}
The quantity $\pi_{ij\mid S^{-}_{ij}}$ is the Gram inner product of the projections of $X_i$ and $X_j$ onto $\operatorname{span}\{X_h:h\in S^{-}_{ij}\}$, while $\Delta_{i\mid S^{-}_{ij}}$ and $\Delta_{j\mid S^{-}_{ij}}$ are the squared norms of the corresponding residual vectors, technically the squared norms of their orthogonal complementary components w.r.t. the inner product. Being shorter, we shall keep on going with the name residual vectors.

\section{Statement of the result}

\begin{theorem}
Let $N\geq 2$, let $X_1,\dots,X_N\in L^2(\Omega,\mathcal F,\mathbb P)$, and fix $i\neq j$. Assume that the principal Gram block $G_{S^{-}_{ij}}$, with $S^{-}_{ij}=\{1,\dots,N\}\setminus\{i,j\}$, is positive definite. Then the following inequality holds
\begin{equation}\label{residual}
\left(
{g}_{ij}-\pi_{ij\mid S^{-}_{ij}}
\right)^2
\leq
\Delta_{i\mid S^{-}_{ij}}\Delta_{j\mid S^{-}_{ij}}\;\;,
\end{equation}
moreover it also holds the estimate
\begin{equation}\label{N_general}
{g}_{ij}^2
\leq
\left(
|\pi_{ij\mid S^{-}_{ij}}|
+
\sqrt{
\Delta_{i\mid S^{-}_{ij}}\Delta_{j\mid S^{-}_{ij}}
}
\right)^2,
\end{equation}
or equivalently,
\begin{equation}
\left(E(X_iX_j)\right)^2
\leq
\left(
|\pi_{ij\mid S^{-}_{ij}}|
+
\sqrt{
\Delta_{i\mid S^{-}_{ij}}\Delta_{j\mid S^{-}_{ij}}
}
\right)^2.
\end{equation}
\end{theorem}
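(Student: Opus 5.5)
The plan is to work with orthogonal projections in $L^2$. Let $M=\operatorname{span}\{X_h:h\in S^{-}_{ij}\}$, which is finite dimensional because $G_{S^{-}_{ij}}$ is positive definite; when $N=2$ take $M=\{0\}$, so that $\pi_{ij\mid S^{-}_{ij}}=0$ and the claims reduce to Cauchy--Schwarz. Let $P$ be the orthogonal projection onto $M$.

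The first step is to identify $P$ concretely. Write $Y=(X_h)_{h\in S^{-}_{ij}}$. The normal equations show that $PX_i=c_i^TY$ with $c_i=G_{S^{-}_{ij}}^{-1}g_i$, and similarly $PX_j=c_j^TY$. Substituting gives
\begin{equation*}
\langle PX_i,PX_j\rangle=c_i^TG_{S^{-}_{ij}}c_j=g_i^TG_{S^{-}_{ij}}^{-1}g_j=\pi_{ij\mid S^{-}_{ij}}.
\end{equation*}
Now set $R_i=X_i-PX_i$ and $R_j=X_j-PX_j$. Because $R_i\perp M$, we get $\|R_i\|^2=g_{ii}-\|PX_i\|^2=\Delta_{i\mid S^{-}_{ij}}$, and likewise $\|R_j\|^2=\Delta_{j\mid S^{-}_{ij}}$. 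Expanding $\langle X_i,X_j\rangle$ and using that the cross terms vanish by orthogonality gives
\begin{equation*}
g_{ij}=\langle PX_i,PX_j\rangle+\langle R_i,R_j\rangle=\pi_{ij\mid S^{-}_{ij}}+\langle R_i,R_j\rangle.
\end{equation*}

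With this decomposition, (\ref{residual}) is exactly the Cauchy--Schwarz inequality $\langle R_i,R_j\rangle^2\le\|R_i\|^2\|R_j\|^2$ applied to the residuals.

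For (\ref{N_general}), apply the triangle inequality to the same decomposition:
\begin{equation*}
|g_{ij}|\le|\pi_{ij\mid S^{-}_{ij}}|+|\langle R_i,R_j\rangle|\le|\pi_{ij\mid S^{-}_{ij}}|+\sqrt{\Delta_{i\mid S^{-}_{ij}}\Delta_{j\mid S^{-}_{ij}}}.
\end{equation*}
Squaring both sides, which are nonnegative, gives the bound. The final displayed form is the same statement, since $g_{ij}=E(X_iX_j)$.

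None of these steps is a real obstacle. The only point needing care is verifying that $c_i^TY$ is the projection, i.e. that $X_i-c_i^TY\perp X_h$ for every $h\in S^{-}_{ij}$. This follows directly from $G_{S^{-}_{ij}}c_i=g_i$.
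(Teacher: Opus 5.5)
Your proposal is correct and follows essentially the same route as the paper: project $X_i,X_j$ orthogonally onto $\operatorname{span}\{X_h:h\in S^{-}_{ij}\}$ via the normal equations, write $g_{ij}=\pi_{ij\mid S^{-}_{ij}}+\langle R_i,R_j\rangle$, then apply Cauchy--Schwarz to the residuals and the triangle inequality. Your explicit check that $X_i-c_i^TY\perp X_h$ follows from $G_{S^{-}_{ij}}c_i=g_i$ is a detail the paper leaves implicit.
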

\begin{remark} Since $\Sigma\equiv Cov(X,X)=G(X-E(X))$ the same theorem provides estimates on an off-diagonal entry $\sigma_{ij}$, $i\neq j$, of any covariance matrix in term of all the remaining entries. The same holds for the matrix $R\equiv corr(X,X)=G(Z)$, with $Z$  being the component-wise standardization of $X$.  
\end{remark}
\begin{remark}
(a) For $N=3$, $S^{-}_{ij}$ contains just one index, say $k$. Then $G_{S^{-}_{ij}}=({g}_{kk})$, $g_i=({g}_{ik})$, and $g_j=({g}_{jk})$. Hence
\begin{equation}
\pi_{ij\mid k}=\frac{{g}_{ik}{g}_{jk}}{{g}_{kk}},
\qquad
\Delta_{i\mid k}={g}_{ii}-\frac{{g}_{ik}^2}{{g}_{kk}},
\qquad
\Delta_{j\mid k}={g}_{jj}-\frac{{g}_{jk}^2}{{g}_{kk}}.
\end{equation}
The theorem gives
\begin{equation}\label{N=3}
{g}_{ij}^2g_{kk}^2
\leq
\left(
\left|{g}_{ik}{g}_{jk}
\right|
+
\sqrt{
\left({g}_{ii}g_{kk}-{g}_{ik}^2\right)
\left({g}_{jj}g_{kk}-{g}_{jk}^2\right)
}
\right)^2,
\end{equation}
then, by using the algebraic identity 
$$
\left(
|ab|+
\sqrt{(Ac-a^2)(Bc-b^2)}
\right)^2
=
ABc^2
-
\left(
|a|\sqrt{Bc-b^2}
-
|b|\sqrt{Ac-a^2}
\right)^2,
$$
inequality (\ref{N=3}) exactly reproduces Theorem 3.1 of [LT] which extends Walker's inequality, this last being recovered by choosing one of the $N=3$ variables to be the identity.\\
(b) For $N=4$, if $i=1$ and $j=2$, then $S^{-}_{12}=\{3,4\}$, $G_{S^{-}_{12}}=G_{34}$, $g_1=({g}_{13},{g}_{14})^T$, and $g_2=({g}_{23},{g}_{24})^T$, the inequality (\ref{N_general}) takes the explicit form 
\begin{equation}\label{N=4}
\begin{aligned}
\left| g_{12} \right|
\leq\;&
\left|
\frac{
g_{44}g_{13}g_{23}
-
g_{34}g_{13}g_{24}
-
g_{34}g_{14}g_{23}
+
g_{33}g_{14}g_{24}
}{
g_{33}g_{44}-g_{34}^2
}
\right|
\\[0.5em]
&+
\sqrt{
g_{11}
-
\frac{
g_{44}g_{13}^2
-
2g_{34}g_{13}g_{14}
+
g_{33}g_{14}^2
}{
g_{33}g_{44}-g_{34}^2
}
}
\sqrt{
g_{22}
-
\frac{
g_{44}g_{23}^2
-
2g_{34}g_{23}g_{24}
+
g_{33}g_{24}^2
}{
g_{33}g_{44}-g_{34}^2
}
}.
\end{aligned}
\end{equation}
Estimates for other pairs $(i,j)$ are obtained by the same formula after relabelling the indices. From (\ref{N=4}) we recover (\ref{N=3}) by setting $g_{14}=g_{24}=g_{34}=0$ and multiplying both sides by $|g_{33}|$.
\end{remark}
\begin{remark}
If $G_{S^{-}_{ij}}$ is only positive semidefinite and not invertible, then the auxiliary variables $(X_h)_{h\in S^{-}_{ij}}$ are linearly dependent in $L^2$. In that case one may remove redundant variables, or replace $G_{S^{-}_{ij}}^{-1}$ by the Moore-Penrose pseudo-inverse.
\end{remark}

\begin{proof}
Let $V:=\operatorname{span}\{X_h:h\in S^{-}_{ij}\}$ and let $P$ be the orthogonal projection onto $V$. In the case $S^{-}_{ij}=\varnothing$, we use the convention $V=\{0\}$ and $P=0$. Define the residual vectors $R_i:=X_i-PX_i$ and $R_j:=X_j-PX_j$, then
$$
X_i=PX_i+R_i,
\qquad
X_j=PX_j+R_j,
$$
with $R_i,R_j\perp V$. Therefore
$$
g_{ij}
=
\langle X_i,X_j\rangle
=
\langle PX_i,PX_j\rangle
+
\langle R_i,R_j\rangle .
$$
If $S^{-}_{ij}\neq\varnothing$, the projection coefficients of $X_i$ and $X_j$ onto $V$ are determined by the normal equations
$$
G_{S^{-}_{ij}}c_i=g_i,
\qquad
G_{S^{-}_{ij}}c_j=g_j.
$$
Since $G_{S^{-}_{ij}}$ is positive definite, it is invertible, and therefore
$$
c_i=G_{S^{-}_{ij}}^{-1}g_i,
\qquad
c_j=G_{S^{-}_{ij}}^{-1}g_j.
$$
Hence
$$
\langle PX_i,PX_j\rangle
=
g_i^TG_{S^{-}_{ij}}^{-1}g_j
=
\pi_{ij\mid S^{-}_{ij}}.
$$
Similarly,
$$
\|R_i\|^2
=
\langle X_i-PX_i,X_i-PX_i\rangle
=
g_{ii}-g_i^TG_{S^{-}_{ij}}^{-1}g_i
=
\Delta_{i\mid S^{-}_{ij}},
$$
and
$$
\|R_j\|^2
=
\langle X_j-PX_j,X_j-PX_j\rangle
=
g_{jj}-g_j^TG_{S^{-}_{ij}}^{-1}g_j
=
\Delta_{j\mid S^{-}_{ij}}.
$$
We notice that if $S^{-}_{ij}=\varnothing$, these identities reduce to $\pi_{ij\mid S^{-}_{ij}}=0$, $\Delta_{i\mid S^{-}_{ij}}=g_{ii}$ and $\Delta_{j\mid S^{-}_{ij}}=g_{jj}$.
Thus in general we have
$$
g_{ij}
=
\pi_{ij\mid S^{-}_{ij}}
+
\langle R_i,R_j\rangle .
$$
and applying Cauchy--Schwarz to the residual vectors gives
$$
\left(
g_{ij}-\pi_{ij\mid S^{-}_{ij}}
\right)^2
=
\langle R_i,R_j\rangle^2
\leq
\|R_i\|^2\|R_j\|^2
=
\Delta_{i\mid S^{-}_{ij}}\Delta_{j\mid S^{-}_{ij}},
$$
which proves the inequality (\ref{residual}). Taking square roots, we obtain
$$
|g_{ij}-\pi_{ij\mid S^{-}_{ij}}|
\leq
\sqrt{
\Delta_{i\mid S^{-}_{ij}}\Delta_{j\mid S^{-}_{ij}}
}.
$$
Therefore, by the triangle inequality, we have
$$
|g_{ij}|
=
\left|
\pi_{ij\mid S^{-}_{ij}}
+
g_{ij}-\pi_{ij\mid S^{-}_{ij}}
\right|
\leq
|\pi_{ij\mid S^{-}_{ij}}|
+
|g_{ij}-\pi_{ij\mid S^{-}_{ij}}|.
$$
and consequently
$$
|g_{ij}|
\leq
|\pi_{ij\mid S^{-}_{ij}}|
+
\sqrt{
\Delta_{i\mid S^{-}_{ij}}\Delta_{j\mid S^{-}_{ij}}
}.
$$
Squaring it gives
$$
g_{ij}^2
\leq
\left(
|\pi_{ij\mid S^{-}_{ij}}|
+
\sqrt{
\Delta_{i\mid S^{-}_{ij}}\Delta_{j\mid S^{-}_{ij}}
}
\right)^2.
$$
which proves the  estimate (\ref{N_general}).
\end{proof}
\section{Cramer Rao improved estimates}

We now indicate how the preceding inequality can be used to refine Cramer-Rao (CR) lower bounds for biased estimators. Let $(P_\theta)_{\theta\in\Theta}$ be a regular one-dimensional statistical model with density $f_\theta$, and let $T$ be an estimator of $\theta$. Define the estimation error $A_\theta \equiv T-\theta$, the bias $b(\theta)\equiv E_\theta(A_\theta)=E_\theta(T)-\theta$, and the score $U_\theta \equiv \partial_\theta\log f_\theta$. Under the usual regularity assumptions, $E_\theta(U_\theta)=0$, $E_\theta(U_\theta^2)=I(\theta)$, the Fisher information, and
\begin{equation}\label{derivative}
E_\theta(A_\theta U_\theta)=1+b'(\theta).
\end{equation}
The ordinary CS inequality gives $\left(1+b'(\theta)\right)^2\leq E_\theta(A_\theta^2)I(\theta)$,
hence $E_\theta(A_\theta^2)\geq (1+b'(\theta))^2/I(\theta)$, that is CR inequality for biased estimators. Walker's improvement corresponds to the estimate $\left(1+b'(\theta)\right)^2\leq\left(E_\theta(A_\theta^2)-b(\theta)^2\right)I(\theta)$, see [W], which gives the sharper lower bound
\begin{equation}\label{Walker}
MSE(T)\equiv E_\theta(A_\theta^2)
\geq
b(\theta)^2+
\frac{\left(1+b'(\theta)\right)^2}{I(\theta)}.
\end{equation}
The $N$-variables inequality proven in the previous section may produce further refinements whenever auxiliary information is at disposal. Indeed, let $H_1,\dots,H_m$ be a set of auxiliary square-integrable random variables, and consider the vector $H=(H_1,\dots,H_m)^T$. Assume that the Gram matrix $M_\theta:=E_\theta(HH^T)$ is positive definite and define
\begin{equation}
a_\theta \equiv E_\theta(A_\theta H),
\qquad
u_\theta \equiv E_\theta(U_\theta H).
\end{equation}
Consider the $N=2+m$ random vector $X=(A_\theta,U_\theta,H_1,\ldots,H_m)$ , by applying the residual inequality (\ref{residual}) to the pair $(A_\theta,U_\theta)$ it is obtained
$$
\left(
E_\theta(A_\theta U_\theta)-a_\theta^TM_\theta^{-1}u_\theta
\right)^2
\leq
\left(
E_\theta(A_\theta^2)-a_\theta^TM_\theta^{-1}a_\theta
\right)
\left(
I(\theta)-u_\theta^TM_\theta^{-1}u_\theta
\right).
$$
Recalling  (\ref{derivative}), whenever $I(\theta)-u_\theta^TM_\theta^{-1}u_\theta>0$, this yields the generalized lower bound
\begin{equation}\label{general_CR}
E_\theta(A_\theta^2)
\geq
a_\theta^TM_\theta^{-1}a_\theta
+
\frac{
\left(
1+b'(\theta)-a_\theta^TM_\theta^{-1}u_\theta
\right)^2
}{
I(\theta)-u_\theta^TM_\theta^{-1}u_\theta.
}
\end{equation}
We can compare formula (\ref{general_CR}), specializing it  to $N=3$, with the corresponding formula obtained in [LT] (supplementary material). For this comparison we take $m=1$ in (\ref{general_CR}) and set $H=H_1\equiv Z$. Then, using the same notations as in [LT], we set
$S_Z\equiv M_\theta=E_\theta(Z^2)$, $a\equiv a_\theta=E_\theta(A_\theta Z)$ and $b\equiv u_\theta=E_\theta(U_\theta Z)$.
We also set $c\equiv E_\theta(A_\theta U_\theta)=1+b'(\theta)$ and recall $E_\theta(U_\theta^2)=I(\theta)$. Then (\ref{general_CR}) produces the lower bound
\begin{equation}\label{me}
\operatorname{MSE}(T)=E_\theta(A_\theta^2)
\geq
\frac{a^2}{S_Z}
+
\frac{
\left(
c-\frac{ab}{S_Z}
\right)^2
}{
I(\theta)-\frac{b^2}{S_Z}
}=\frac{a^2}{S_Z}
+
\frac{
\left(
S_Zc-ab
\right)^2
}{
S_Z\left(I(\theta)S_Z-b^2\right)
},
\end{equation}
to be compared with the lower bound in [LT] which is\footnote{the original [LT] (supplementary material) formula contains some typos}
\begin{equation}\label{LT}
\operatorname{MSE}(T)
\geq
\frac{a^2}{S_Z}
+
\frac{
\left(
S_Z|c|-|ab|
\right)_{+}^2
}{
S_Z\left(I(\theta)S_Z-b^2\right)
}.
\end{equation}
Thus the difference between the two formulas is the replacement of the quantity  $(S_Z|c|-|ab|)^2_{+}$ appearing in (\ref{LT}) by the quantity $(S_Zc-ab)^2$. Since
$$
|S_Zc-ab|\geq \bigg||S_zc|-|ab|\bigg|\geq (S_z|c|-|ab|)_{+}
$$
 the first one tends to be sharper (with significant differences  in some cases). Notice that for $Z=1$, being $b=0$, Walker's lower bound is recovered by both formulas.\\
We now show how our lower bound (\ref{general_CR}) allows for interesting generalizations. Consider Walker example of shrinkage estimators $T=w\widehat\theta+(1-w)\mu$, where the estimator $\widehat\theta$ is unbiased and $\mu$ is a reference value for $\theta$ (its prior mean in the Bayesian approach). A direct computation proves that, within this class of biased estimators, Walker's lower bound turns out to be optimal, equalizing the value
\begin{equation}\label{optimal}
MSE(T)=(1-w)^2(\mu-\theta)^2+\frac{w^2}{I(\theta)},
\end{equation}
with $I(\theta)=1/Var_{\theta}(\widehat\theta)$. Notice that explicit computability and optimality  are essentially due to the fact that $T$ is an affine function of $\widehat\theta$ and also an affine function of the score $U_{\theta}$. However such linear dependence in general does not hold for robust biased estimators because of the presence of nonlinear influence functions; in these cases Walker's bound ceases to be optimal.\\
To make the point, consider the elementary model $ X\sim \operatorname{Bin}(m,p),0<p<1,$ where $m$ is known and $p$ is the unknown parameter. For $0<\varepsilon<\frac1{2}$ define the function $t_\varepsilon(x)\equiv \min\left\{1-\varepsilon,\max\left\{\varepsilon,\frac{x}{m}\right\}\right\}$,
then set $T_\varepsilon(X)\equiv t_\varepsilon(X)$. This estimator is statistically meaningful because the unbiased estimator
$T=X/m$ may take the boundary values $0$ and $1$. Such boundary values are often inconvenient if the next step involves log-odds or likelihood ratios. Clipping moves the estimator away from the boundary, but introduces bias. As before, considering $A_{p,\varepsilon}=T_{\varepsilon}-p$ and $b_{\varepsilon}(p)=E_p(T_{\varepsilon})-p$, Walker's lower bound is ($I=\frac{m}{p(1-p)}$):
$$
E_p(A_{p,\varepsilon}^2)
\geq
LB_{Walker}\equiv b_\varepsilon(p)^2
+
\frac{(1+b_\varepsilon'(p))^2}{I}.
$$
We can improve over it by including a natural auxiliary variable $T^2=(\frac{X}{m})^2$ and considering our lower bound (\ref{general_CR}) for $N=4$ and $H=(1,T^2)$, obtaining (after some algebra)
\begin{equation}\label{N=4!}
E_p(A_{p,\varepsilon}^2)\geq LB_{Walker}+
\frac{(I C_\varepsilon-BD)^2}{(I^2v-ID^2)},
\end{equation}
with $v=\operatorname{var}_p(T^2)>0$, $C_{\varepsilon}=\operatorname{cov}_p(A_{p,\varepsilon},T^2)$, $D=\operatorname{cov}_p(U_{p},T^2)$ and $B=1+b'_{\varepsilon}$. Therefore, if $m\geq 2$, $T^2$ is not an affine function of the score $U_p$ (actually it is a quadratic polynomial),  hence $Iv>D^2$ and (\ref{N=4!}) improves over Walker's bound in all cases for which $I C_\varepsilon\neq BD$. Contrary to that, the alternative choice of auxiliary variables given by $H=(1,T)$ produces $Iv=D^2$,  hence it gives rise to degeneracy. Further examples of meaningful biased estimators can be built  for other statistical models showing that our $N=4$ bound on the MSE may produce improvements over Walker or Lupu-Tanase bounds, the above one was chosen because of its simplicity.

\end{document}